\title{Positively curved Finsler metrics on vector bundles}
\author{Kuang-Ru Wu}
\newcommand{\RN}[1]{%
  \textup{\uppercase\expandafter{\romannumeral#1}}%
}
\theoremstyle{plain}
\numberwithin{equation}{section}
\begin{document}

\date{}

\parskip=6pt

\maketitle
\begin{abstract}
We construct a convex and strongly pseudoconvex Kobayashi positive Finsler metric on a vector bundle $E$ under the assumption that the symmetric power of the dual $S^kE^*$ has a Griffiths negative $L^2$-metric for some $k$.
The proof relies on the negativity of direct image bundles and the Minkowski inequality for norms. As a corollary, we show that given a strongly pseudoconvex Kobayashi positive Finsler metric, one can upgrade to a \textit{convex} Finsler metric with the same property.  We also give an extremal characterization of Kobayashi curvature for Finsler metrics.
\end{abstract}

\section{Introduction}
Let $E$ be a holomorphic vector bundle of rank $r$ over a compact complex manifold $X$ of dimension $n$. Denote by $E^*$ the dual bundle of $E$. Let $P(E^*)$ be the projectivized bundle of $E^*$, $O_{P(E^*)}(-1)$ the tautological line bundle over $P(E^*)$, and denote by $O_{P(E^*)}(1)$ the dual bundle of $O_{P(E^*)}(-1)$. A vector bundle $E$ is said to be ample if the line bundle $O_{P(E^*)}(1)$ is ample, see \cite{Hart66}.

Meanwhile, for a Hermitian holomorphic vector bundle $(E,H)$, Griffiths \cite{Griff69} introduced a notion of positivity using the curvature of the metric $H$, which is now called Griffiths positivity. When $E$ is a line bundle, the existence of a Griffiths positive Hermitian metric on $E$ is equivalent to the ampleness of $E$; essentially by the Kodaira embedding theorem. When $E$ is of higher rank, the existence of a Griffiths positive metric implies ampleness (see e.g. \cite[p. 89]{MR909698} or \cite[p. 283]{demailly1997complex}); however, the converse statement known as the Griffiths conjecture is still open (see earlier results \cite{Umemura,CampanaFlenner} and recent developments \cite{naumann2017approach,demailly2020hermitianyangmills,pingali2021note,finski2020monge}). 

On the other hand, Kobayashi \cite{Negfinsler} defined a notion of curvature for strongly pseudoconvex Finsler metrics on holomorphic vector bundles, and he showed that the dual bundle $E^*$ is ample if and only if $E$ has a strongly pseudoconvex Kobayashi negative Finsler metric. Moreover, he raised the question whether the ampleness of $E$ is equivalent to the existence of a strongly pseudoconvex Kobayashi positive Finsler metric on $E$. One direction is proved in \cite{feng2018complex} where the authors show the existence of such a Finsler metric on $E$ implies the ampleness of $E$.

The motivation of this paper is twofold. One is the remaining part of the Kobayashi conjecture, and the other is whether Kobayashi's conjecture implies Griffiths' conjecture. Although we do not solve either of the problems, some progress is made in the results below. Before we state our results, we recall a natural way to produce Hermitian metrics on direct image bundles. 

Let $h$ be a Hermitian metric on $O_{P(E)}(1)$ and $\{\omega_z\}_{z\in X}$ be a family of Hermitian metrics on the fibers $P(E_z)$. Given a positive integer $k$, there is a canonical isomorphism $$\Phi_{k,z}:S^kE^*_z\to H^0(P(E_z),O_{P(E_z)}(k)), $$ see for example \cite[p. 278, Theorem 15.5]{demailly1997complex}. We can define an $L^2$-metric $H_k$ on $S^kE^*$ using $h^k$ and $\{\omega_z\}$
\begin{align}\label{L2 metric}
H_k(u,v):=\int_{P(E_z)} h^k(\Phi_{k,z}(u),\Phi_{k,z}(v) ) \omega_z^{r-1}, \textup{ for $u \textup{ and } v\in S^kE^*_z$.}
\end{align}
\begin{theorem}\label{thm dual}
If the Hermitian metric $H_k$ in (\ref{L2 metric}) is Griffiths negative for some $k$, then $E^*$ admits a convex Finsler metric that is strongly plurisubharmonic on  $E^*\setminus \{\text{zero section}\}$; moreover, the bundle $E$ admits a convex and strongly pseudoconvex Finsler metric with positive Kobayashi curvature.
\end{theorem}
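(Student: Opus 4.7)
The plan is to construct the required Finsler metric $F^*$ on $E^*$ directly from $H_k$, and then obtain $F$ on $E$ by Banach-space duality. To build $F^*$, introduce the fiberwise $k$-th symmetric power map $\sigma:E^*\to S^kE^*$, $u\mapsto u^{\otimes k}$, which is holomorphic and $k$-homogeneous on fibers, and set
\[
F^*(u) := H_k\bigl(\sigma(u),\sigma(u)\bigr)^{1/(2k)}.
\]
Then $F^*$ is fiberwise $1$-homogeneous, continuous, and positive off the zero section. Since $H_k$ is Griffiths negative, $H_k(\,\cdot\,,\,\cdot\,)$ is strictly plurisubharmonic on $S^kE^*\setminus\{0\}$. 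A short check shows $\sigma$ is a holomorphic immersion of $E^*\setminus\{0\}$ into $S^kE^*\setminus\{0\}$, so pullback gives strict plurisubharmonicity of $F^{*2k}$ on $E^*\setminus\{0\}$. Combined with the $1$-homogeneity of $F^*$ and the Euler relations in the radial direction, this yields the strong plurisubharmonicity asserted in the theorem.

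Convexity of $F^*$ comes from Minkowski's inequality. Under the isomorphisms $\Phi_{j,z}:S^jE^*_z\to H^0(P(E_z),O_{P(E_z)}(j))$ for $j=1$ and $k$, the identity $\Phi_{k,z}(u^{\otimes k}) = \Phi_{1,z}(u)^k$ holds, and therefore
\[
F^*(u)^{2k} = \int_{P(E_z)} |\Phi_{k,z}(u^{\otimes k})|_{h^k}^2 \, \omega_z^{r-1} = \int_{P(E_z)} |\Phi_{1,z}(u)|_{h}^{2k} \, \omega_z^{r-1}.
\]
Thus $F^*(u)$ is the $L^{2k}(\omega_z^{r-1})$-norm of the non-negative function $[\xi]\mapsto|\Phi_{1,z}(u)|_h([\xi])$ on $P(E_z)$. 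Since $\Phi_{1,z}$ is linear, $|\Phi_{1,z}(u+v)|_h\le |\Phi_{1,z}(u)|_h+|\Phi_{1,z}(v)|_h$ pointwise, and Minkowski's inequality in $L^{2k}$ then gives $F^*(u+v)\le F^*(u)+F^*(v)$.

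To produce the Finsler metric on $E$, I would take the fiberwise dual
\[
F(v) := \sup\bigl\{|\langle u,v\rangle| : u\in E^*,\ F^*(u)\le 1\bigr\}.
\]
Standard duality of norms ensures $F$ is a convex Finsler metric on $E$. Its strong pseudoconvexity and positive Kobayashi curvature should then follow from the strong plurisubharmonicity and convexity of $F^*$, invoking the negative/positive curvature duality between Finsler metrics on $E$ and on $E^*$ in the spirit of \cite{Negfinsler} and \cite{feng2018complex}.

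The main obstacle I expect is this last step: converting strong plurisubharmonicity of the dual metric $F^*$ into strictly positive Kobayashi curvature for $F$. The Legendre duality between $F^*$ and $F$ is nonlinear, and Kobayashi curvature is a delicate second-order invariant that does not transform transparently under it. One must exploit the convexity of $F^*$ in an essential way and establish the appropriate curvature identities for dual Finsler metrics, so that the strict plurisubharmonicity of $F^{*2}$ propagates to strict positivity of the Kobayashi curvature tensor of $F$ in all directions transverse to the fibers.
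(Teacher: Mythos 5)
Your construction of $F_k$ on $E^*$, the verification of strong plurisubharmonicity from Griffiths negativity, and the Minkowski-inequality argument for convexity (recognizing $F_k(u)$ as the $L^{2k}(\omega_z^{r-1})$-norm of $|\Phi_{1,z}(u)|_h$) all match the paper's proof essentially step for step. The problem is the final step, which you correctly identify as the ``main obstacle'' but then leave open: passing from the convex, strongly plurisubharmonic metric on $E^*$ to a strongly pseudoconvex, Kobayashi positive metric on $E$ is precisely the content that has to be proved, and as written your proposal does not prove the second half of the theorem.

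Two concrete things are missing. First, you dualize $F^*$ directly, but a merely convex Finsler metric can fail to be strictly convex, and then its dual norm need not even be smooth away from the zero section, let alone strongly pseudoconvex; the paper first adds a small Hermitian metric to $F_k$ to upgrade convexity to \emph{strong} convexity (positive fiberwise real Hessian of $G$) while preserving strong plurisubharmonicity. Second, the curvature transfer under Legendre duality is not left as a heuristic: the paper invokes the known duality theorem (Demailly's Theorem 2.5 together with Sommese) asserting that the dual of a strongly convex, strongly plurisubharmonic Finsler metric has transversal Levi signature $(r,n)$, and then proves a separate lemma (Lemma \ref{lem signature}) showing that transversal Levi signature $(r,n)$ forces $(F_{i\bar j})>0$ and forces the curvature of the induced metric on $O_{P(E)}(-1)$ to have signature $(n,r-1)$, which by Lemma \ref{K positive} is exactly Kobayashi positivity. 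The key computation there is that $\pi^*\Theta(h_G)=-\partial\bar\partial\log G$ is positive on the pushforward of the $n$-dimensional negative subspace $W$, since $-\partial\bar\partial\log F \ge -\partial\bar\partial F/F$ on $W$. Without the regularization to strong convexity and without this signature lemma, your duality step does not go through.
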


The idea behind the proof of Theorem \ref{thm dual} is straightforward. The $k$-th root
of $H_k$ induces a Finsler metric $F_k$ on $E^*$. This Finsler metric is convex by the Minkowski inequality, and it is strongly plurisubharmonic on $E^*\setminus \{\text{zero section}\}$ by Griffiths negativity. The convexity of $F_k$ is of great importance (see \cite[Remark 2.7]{DemaillyMSRI} for the necessity of convexity). Adding a small Hermitian metric to $F_k$ and taking the dual, the resulting Finsler metric on $E$ is what we want (see Section \ref{sec thm1} for details). 

Whether the assumption in Theorem \ref{thm dual} can be deduced from ampleness of $E$ is still unknown (of course, an affirmative answer would solve Kobayashi's conjecture). Nonetheless, we emphasize that the ampleness of $E$ does imply $S^kE^*$ has a Griffiths negative Hermitian metric $\mathcal{H}_k^*$ for $k$ large; such a Hermitian metric is the dual of  $\mathcal{H}_k$ on $S^kE$ constructed as in (\ref{L2 metric}) except one works on the dual side $O_{P(E^*)}(1)$ (this is the well-known positivity of direct image bundles, see \cite{Berndtsson09,MourouganeTaka, Takayama, positivityandvanishingthmliu,liu2014curvatures}).  However the $k$-th root of $\mathcal{H}^*_k$ is not convex (see Section \ref{sec convex} for examples). This lack of convexity is why we work on $O_{P(E)}(1)$ and use negativity of direct image bundles (see also \cite[Section 3]{berndtsson2009positivity}). Another natural approach is to take the $k$-th root of $\mathcal{H}_k$. The resulting Finsler metric on $E$ is convex by Minkowski's inequality, but it is unclear if this metric is Kobayashi positive. (One can perhaps utilize the intimate connection between the optimal $L^2$ estimates/extensions and the positivity of curvature e.g. \cite{GuanZhou,BoLem,Hacon,deng2018new,ZhouZhu}).

One case in which the assumption in Theorem \ref{thm dual} holds is when 
$E$ has a strongly pseudoconvex Finsler metric with positive Kobayashi curvature. Indeed, such a Finsler metric induces a Hermitian metric $h$ on $O_{P(E)}(1)$ whose curvature has signature $(r-1,n)$ (see Lemma \ref{K positive}). Then following the observation in \cite[Section 4.2]{positivityandvanishingthmliu}, the curvature of $H_k$ in (\ref{L2 metric}) is Griffiths negative for $k$ large. We therefore have 

\begin{corollary}\label{cor}
If $E$ has a strongly pseudoconvex Finsler metric with positive Kobayashi curvature, then $E$ admits a convex and strongly pseudoconvex Finsler metric with positive Kobayashi curvature.
\end{corollary}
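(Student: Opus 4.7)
The plan is to reduce Corollary~\ref{cor} to Theorem~\ref{thm dual}: from the given strongly pseudoconvex, Kobayashi-positive Finsler metric $F$ on $E$, I will construct data $(h,\{\omega_z\})$ so that the $L^2$-metric $H_k$ in (\ref{L2 metric}) is Griffiths negative for some $k$, after which Theorem~\ref{thm dual} immediately yields the desired convex, strongly pseudoconvex, Kobayashi-positive Finsler metric on $E$.

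\emph{Step 1 (extracting $h$).} The fiberwise norm $F$ on $E$ dualizes to a Hermitian metric on the tautological subbundle $O_{P(E)}(-1)\subset \pi^*E$, hence to a Hermitian metric $h$ on $O_{P(E)}(1)$. By Lemma~\ref{K positive}, positive Kobayashi curvature of $F$ is equivalent to the Chern curvature $\Theta(h)$ having signature $(r-1,n)$ on the total space of $P(E)$: strictly positive along each fiber $P(E_z)$ and strictly negative along a complementary $n$-dimensional horizontal distribution. Choose $\omega_z:=\Theta(h)|_{P(E_z)}$, which is a K\"ahler form on the fiber by the vertical positivity.

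\emph{Step 2 (curvature of $H_k$).} With this choice, assemble $H_k$ on $S^kE^*=\pi_*O_{P(E)}(k)$ via (\ref{L2 metric}) and compute its Chern curvature through the standard direct image curvature formula for a smooth projective morphism with relatively K\"ahler fibers. The horizontal part of $\Theta(h^k)=k\Theta(h)$ contributes a term of order $k$ that is strictly negative as a Hermitian form on $S^kE^*$, while the correction terms -- horizontal lifts, the fiber $\bar\partial$-inverse, and the variation of $\omega_z$ -- are bounded uniformly in $k$ by using the spectral gap of the fiberwise Kodaira--Laplacian on $O_{P(E_z)}(k)$, which grows linearly in $k$ thanks to the vertical positivity of $\Theta(h)$. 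This is the observation recorded in \cite[Section~4.2]{positivityandvanishingthmliu}. For $k$ sufficiently large, the order-$k$ negative leading term dominates the uniformly bounded error, so $H_k$ is Griffiths negative. Theorem~\ref{thm dual} then finishes the argument.

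The main obstacle is Step~2, namely the uniform-in-$k$ control of the correction terms in the direct image curvature formula. Both halves of the signature $(r-1,n)$ of $\Theta(h)$ enter essentially: vertical positivity supplies the spectral gap that tames the correction, while horizontal negativity supplies the order-$k$ leading term with the correct sign. Either half alone would be insufficient, so the fact that Kobayashi positivity of $F$ simultaneously produces both -- as encoded in Lemma~\ref{K positive} -- is what makes the reduction to Theorem~\ref{thm dual} go through.
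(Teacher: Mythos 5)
Your overall reduction is exactly the paper's: build $h$ on $O_{P(E)}(1)$ from $F$ via Lemmas \ref{fiber neg} and \ref{K positive}, show the $L^2$-metric $H_k$ of (\ref{L2 metric}) is Griffiths negative for $k$ large, and invoke Theorem \ref{thm dual}. But the step you flag as ``the main obstacle'' --- uniform-in-$k$ control of the correction terms in a direct image curvature formula via a spectral gap for the fiberwise Kodaira--Laplacian --- is precisely what the paper does \emph{not} do, and as written your Step 2 is a program rather than a proof: you assert the uniform bound without establishing it, and the ``standard'' curvature formulas you appeal to are usually stated for $\pi_*(K_{X/Y}\otimes L)$ rather than for $\pi_*O_{P(E)}(k)$ with an arbitrary family $\{\omega_z\}$, so some adaptation would be needed. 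The paper's route makes the obstacle evaporate: it covers $P(E)$ by finitely many charts carrying special frames in which $G_{i\bar j}=\delta_{ij}$ and $G_{i\bar j\alpha}=G_{i\bar j\bar\beta}=0$ at a point, so that the cross terms of $\Theta(h)$ vanish and the signature $(r-1,n)$ forces the horizontal block $(-\partial^2\log h/\partial z_\alpha\partial\bar z_\beta)$ to be negative on each chart; it then picks $k$ large enough that $k\log h+\log\det(g)$ has positive horizontal Hessian on every chart (the finite cover gives uniformity for free, as in (\ref{k large})); finally, extending $u$ holomorphically with vanishing covariant derivative at $z_0$ and differentiating $H_k(u,u)$ under the integral sign, the integrand splits as a nonnegative complete square plus a strictly positive term (see (\ref{curvature})), so the full Hessian $\partial\bar\partial H_k(u,u)(\eta,\bar\eta)$ is manifestly positive and $H_k(\Theta u,u)(\eta,\bar\eta)<0$. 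In other words, for Griffiths negativity all the would-be correction terms have a favorable sign and can simply be discarded pointwise; no $\bar\partial$-inverse, no spectral gap, and no uniform estimate beyond compactness of the finite cover is required. Your observation that both halves of the signature enter is still accurate --- vertical positivity is what forces the horizontal block to be negative in the normalized frame --- but it is used only through this linear-algebra step, not through any analytic gap estimate. I would recommend either carrying out the Liu--Yang style estimate in detail or, more simply, replacing Step 2 by the direct pointwise computation above.
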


Assuming Kobayashi's conjecture is true, Corollary \ref{cor} could be seen as a small step towards Griffiths' conjecture in the sense that the Finsler metric can be made convex, namely a norm, on each fiber (even if not a norm induced by an inner product). 

Using Theorem \ref{thm dual} and Corollary \ref{cor}, we can give a different proof for the known direction in Kobayashi's conjecture (\cite{feng2018complex}). Indeed, given a strongly pseudoconvex Kobayashi positive Finsler metric on $E$, we can construct a Finsler metric on $E^*$ which is strongly plurisubharmonic on $E^*\setminus \{\text{zero section}\}$, so the induced metric on $O_{P(E^*)}(-1)$ has negative curvature, thus $E$ is ample.

We will go over the basics on Kobayashi curvature of Finsler metrics in Section \ref{sec review}. Theorem \ref{thm dual} and Corollary \ref{cor} are proved in Section \ref{sec thm1}. In Section \ref{sec convex}, we give explicit examples about convexity of inner products.
In the last section, we give an extremal characterization of Kobayashi curvature following \cite{LLmax,LLextrapolation}, where Lempert defined curvature for a much broader class of metrics. 

I am indebted to L\'aszl\'o Lempert for many critical comments and suggestions. Part of the paper was done at Purdue University, and I would like to thank the support by NSF grant DMS-1764167.
I am grateful to I-Hsun Tsai and Xiaokui Yang for their interest in this work. Thanks are also due to Academia Sinica and National Center for Theoretical Sciences for providing a stimulating environment.

\section{Kobayashi curvature of Finsler metrics}\label{sec review}

There is a sizable literature for Finsler metrics on vector bundles, for example \cite{Negfinsler, ComplexFinsler, Aikoupartial,AikouMSRI,caowong,liuchern,liudonaldson}. Therefore, we will be very brief on the review; one can consult the above articles for details. We will use \cite{ComplexFinsler}, an updated version of \cite{Negfinsler}. 
\subsection{Finsler metrics}
Let $E$ be a holomorphic vector bundle of rank $r$ over a compact complex manifold $X$ of dimension $n$. For a vector $\zeta\in E_z$, we will symbolically write $(z,\zeta)\in E$. A smooth Finsler metric $F$ on the vector bundle $E\to X$ is a real-valued function on $E$ such that 
\begin{align*}
    &(1) \text{ $F$ is continuous on $E$ and smooth away from the zero section of $E$}.\\
    &(2) \text{ For $(z,\zeta)\in E$}, F(z,\zeta)\geq 0, \text{ and equality holds if and only if $\zeta=0$}.\\
    &(3) \text{ $F(z,\lambda\zeta)=|\lambda|F(z,\zeta)$,  for $\lambda\in \mathbb{C}$}.
\end{align*}
Since $F^2$ will be used more often than $F$ in our computations, we denote $F^2$ by $G$ throughout the paper. 

Denote by $P(E)$ the projectivized bundle of $E$, and by $O_{P(E)}(-1)$ the tautological line bundle over $P(E)$. Let $p$ be the projection from $P(E)$ to $X$, and denote the pull-back bundle $p^*E$ by $\tilde{E}$. In summary,
\[
 \begin{tikzcd}
  O_{P(E)}(-1) \subset  \tilde{E}   \arrow{d}\arrow{r}{\tilde{p}} & E \arrow{d}\\
   P(E) \arrow{r}{p} & X.
  \end{tikzcd}
  \]
For a vector $\zeta\in E_z$, we denote by $[\zeta]$ the equivalence class of $\zeta$ in $P(E_z)$, and we will write $(z,[\zeta])\in P(E)$. The pull-back bundle $\tilde{E}=\coprod_{(z,[\zeta])\in P(E)}(z,[\zeta])\times E_z$, and we will denote an element in $\tilde{E}$ by $(z,[\zeta],Z)$ with $Z\in E_z$. There is a one-to-one correspondence between smooth Finsler metrics on $E$ and smooth Hermitian metrics on $O_{P(E)}(-1)$, furnished by $\tilde{p}$.
If $F$ is a Finsler metric on $E$ with $G=F^2$ and we denote the corresponding Hermitian metric on $O_{P(E)}(-1)$ by $h_G$, then the correspondence is 
\begin{equation}\label{corres}
    h_G(z,[\zeta],\zeta)=G(z,\zeta).
\end{equation}

We use $(z_1,...,z_n)$ for local coordinates on $X$, and $(\zeta_1,...,\zeta_r)$ for local fiber coordinates on $E$ with respect to a holomorphic frame $\{s_1,...,s_r\}$. So $(z_1,...,z_n,\zeta_1,...,\zeta_r)$ is a coordinate system on $E$. Let $F$ be a Finsler metric on $E$ with $G=F^2$. We write 
\begin{align*}
G_i=\partial G/\partial \zeta_i\,,\text{      }\text{  }G_{\bar{j}}=\partial G/\partial \bar{\zeta}_j\,,\text{      }\text{  } G_{i\Bar{j}}=\partial^2 G/\partial \zeta_i \partial\bar{\zeta}_{j}\,, \\    G_{i\alpha}=\partial G_i/\partial z_\alpha\,,\text{      }\text{  } G_{i\bar{j}\bar{\beta}}=\partial G_{i\bar{j}}/\partial \bar{z}_\beta\,,\text{ etc.,}
\end{align*}
with Latin letters $i,j$ for the fiber direction and Greek letters $\alpha,\beta$ for the base direction. The following formulas will be used later; their derivation can be found in \cite[p. 147]{ComplexFinsler}. For $\lambda\in \mathbb{C}$, 
\begin{align}
&G_{k\bar{j}}(z,\lambda\zeta)=G_{k\bar{j}}(z,\zeta)\label{6}.\\
&G(z,\zeta)=\sum_i G_i(z,\zeta)\zeta_i=\sum_{i,j}G_{i\bar{j}}(z,\zeta)\zeta_i\bar{\zeta}_j\label{7}.
\end{align}

\subsection{Strongly pseudoconvex Finsler metrics}

A Finsler metric $F$ is said to be
\begin{align*}
(1) &\text{ strongly pseudoconvex if $(F_{i\bar{j}})$ is positive definite on $E\setminus \{\text{zero section}\}$}.\\
(2) &\text{ convex if $F$ restricted to each fiber $E_z$ is convex},\\ &\text{namely the fiberwise real Hessian of $F$ is positive semidefinite on $E\setminus \{\text{zero section}\}$}.\\
(3) &\text{ strongly convex if the fiberwise real Hessian of $G$ is positive on $E\setminus \{\text{zero section}\}$}.
\end{align*}
Note that in (1), the requirement $(F_{i\bar{j}})>0$ is equivalent to $(G_{i\bar{j}})>0$, which can be shown by using the formulas in \cite[(3.3) and (3.10)]{ComplexFinsler}. A Hermitian metric always satisfies (1), (2), and (3). Also notice that a 'convex' Finsler metric in \cite{Negfinsler} is what we call 'strongly pseudoconvex' here. Let us recall a lemma of strong pseudoconvexity (\cite[p. 150 Theorem (1)]{ComplexFinsler}).
\begin{lemma}\label{fiber neg}
 Let $F$ be a Finsler metric on $E$ and $h_G$ be its corresponding Hermitian metric on $O_{P(E)}(-1)$. Then $F$ is strongly pseudoconvex if and only if the curvature $\Theta(h_G)$ of $h_G$ restricted to each fiber of $P(E)\to X$ is negative definite.
\end{lemma}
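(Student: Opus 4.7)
The plan is to compute $\Theta(h_G)$ in local coordinates, isolate its restriction to a fiber $P(E_{z})$, and show by an algebraic identity that negative-definiteness of that restriction at every point of every fiber is equivalent to positive-definiteness of the $r\times r$ matrix $(G_{i\bar j})$ on $E\setminus\{0\}$.

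First, around a point $(z_0,[\zeta^0])\in P(E)$ with $\zeta^0_r\neq 0$, I would choose a local holomorphic frame $(s_1,\ldots,s_r)$ of $E$ and affine coordinates $w_i=\zeta_i/\zeta_r$, $i<r$, on $P(E_{z_0})$. The tautological section $s(z,[w]):=\sum_{i<r}w_is_i(z)+s_r(z)$ trivializes $O_{P(E)}(-1)$ and, by (\ref{corres}), $|s|^2_{h_G}=G(z,w,1)$. Hence $\Theta(h_G)=-\partial\bar\partial\log G(z,w,1)$, and restricting to the fiber (i.e.\ freezing $z$) gives
\begin{equation*}
\Theta(h_G)\big|_{P(E_{z_0})}=-\sum_{i,j<r}\frac{GG_{i\bar j}-G_iG_{\bar j}}{G^2}\bigg|_{(z_0,w,1)}dw_i\wedge d\bar w_j.
\end{equation*}
Fiber negativity is therefore the same as positive definiteness of the $(r-1)\times(r-1)$ matrix $M_{ij}:=GG_{i\bar j}-G_iG_{\bar j}$, $i,j<r$, at every $\zeta=(w,1)$. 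Repeating in the other affine charts and invoking the scaling invariance (\ref{6}) extends this to every $\zeta\in E_{z_0}\setminus\{0\}$.

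Second, I would turn $M>0$ into positivity of $H:=(G_{k\bar l})$ via an algebraic identity. Reading $H$ as a (possibly indefinite) Hermitian form $\langle u,v\rangle_H:=\sum G_{i\bar j}u_i\bar v_j$, the relations in (\ref{7}) together with (\ref{6}) give $\sum G_iv_i=\langle v,\zeta\rangle_H$ and $G=\langle\zeta,\zeta\rangle_H$, whence
\begin{equation*}
\sum_{i,j=1}^{r}(GG_{i\bar j}-G_iG_{\bar j})v_i\bar v_j = \langle\zeta,\zeta\rangle_H\langle v,v\rangle_H-|\langle v,\zeta\rangle_H|^2,\qquad v\in\mathbb{C}^r,
\end{equation*}
independently of the sign of $H$. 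If $H>0$ then the right-hand side is $\geq 0$ by Cauchy--Schwarz, with equality only on $\mathbb{C}\zeta$; since $\zeta_r=1$, the hyperplane $\{v_r=0\}$ meets $\mathbb{C}\zeta$ only at $0$, so $M>0$. Conversely, $M>0$ yields $\langle v',v'\rangle_H > |\langle v',\zeta\rangle_H|^2/G$ for every nonzero $v'$ with $v'_r=0$; decomposing a general $v\in\mathbb{C}^r$ as $v=v'+\lambda\zeta$ with $v'_r=0$ and expanding, this estimate together with $\langle\zeta,\zeta\rangle_H=G>0$ combine by completing the square into
\begin{equation*}
\langle v,v\rangle_H > \frac{|\langle v',\zeta\rangle_H+\lambda G|^2}{G}\geq 0
\end{equation*}
whenever $v\neq 0$, giving $H>0$ at $(z_0,\zeta)$.

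The main obstacle is the converse: $M>0$ is formally weaker than $H>0$ at a single point, since it is positivity only on the quotient $E_{z_0}/\mathbb{C}\zeta$. To close the gap one must supply the extra radial input $\langle\zeta,\zeta\rangle_H=G(z_0,\zeta)>0$ and splice it with $M>0$ through the completing-the-square estimate above. Once this algebraic step is in place, everything else reduces to a routine differentiation of $\log G$ and bookkeeping across affine charts.
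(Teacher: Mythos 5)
Your proof is correct. The paper does not actually prove this lemma --- it is quoted from Kobayashi (\cite[p.~150, Theorem (1)]{ComplexFinsler}) --- but your argument is essentially the standard one found there: identify $\Theta(h_G)|_{P(E_z)}$ with $-\partial\bar\partial_w\log G$, reduce fiber negativity to positivity of the Schur-complement matrix $\bigl(GG_{i\bar j}-G_iG_{\bar j}\bigr)_{i,j<r}$, and pass between that and positivity of the full matrix $(G_{i\bar j})$ via the Euler identities $G_i=\sum_j G_{i\bar j}\bar\zeta_j$, $G=\langle\zeta,\zeta\rangle_H>0$, Cauchy--Schwarz in one direction and completing the square in the other. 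You correctly flag and close the only nontrivial gap (recovering positivity in the radial direction $\mathbb{C}\zeta$ from $G>0$), so nothing is missing.
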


In the rest of this subsection, we will assume the Finsler metric $F$ is strongly pseudoconvex. There is a natural Hermitian metric $\tilde{G}$ on the pull-back bundle $\tilde{E}$ (see \cite{Negfinsler}). In terms of local coordinates, the Hermitian metric $\tilde{G}$ is given by  
$$\tilde{G}_{(z,[\zeta])}(Z,Z)=\sum_{i,j}G_{i\bar{j}}(z,\zeta)Z_i\bar{Z}_j, \text{ for $Z=\sum^r_{i=1}Z_i s_i(z)\in E_z$}.$$
By (\ref{7}), the metric $\tilde{G}$ restricted to $O_{P(E)}(-1)$ is the same as the Hermitian metric $h_G$ on $O_{P(E)}(-1)$ coming from $G$ on $E$ through the one-to-one correspondence (\ref{corres}). 

Now $(\tilde{E},\tilde{G})$ is a Hermitian holomorphic vector bundle, so we can talk about its Chern curvature $\Theta$, an $\End \tilde{E}$-valued $(1,1)$-form on $P(E)$. With respect to the metric $\tilde{G}$, the bundle $\tilde{E}$ has a fiberwise orthogonal decomposition $O_{P(E)}(-1)\oplus O_{P(E)}(-1)^\perp$, and so $\Theta$ can be written as a block matrix. Let $\Theta|_{O_{P(E)}(-1)}$ denote the block in the matrix $\Theta$ corresponding to $\End(O_{P(E)}(-1))$. Since $O_{P(E)}(-1)$ is a line bundle, $\Theta|_{O_{P(E)}(-1)}$ is a $(1,1)$-form on $P(E)$.
\begin{definition}
The $(1,1)$-form $\Theta|_{O_{P(E)}(-1)}$ is called the Kobayashi curvature of the strongly pseudoconvex Finsler metric $F$. Kobayashi positivity (or negativity) means the positivity (or negativity) of $\Theta|_{O_{P(E)}(-1)}$.
\end{definition}

Following the computation in \cite[Section 5]{ComplexFinsler}, we have 
\begin{lemma}\label{K positive}
Let $F$ be a strongly pseudoconvex Finsler metric on $E$ and $h_G$ be the corresponding Hermitian metric on $O_{P(E)}(-1)$. Then $F$ is Kobayashi positive (or negative) if and only if $\Theta(h_G)$ has signature $(n,r-1)$ (or $(0,n+r-1)$).
\end{lemma}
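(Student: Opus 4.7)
The plan is to prove the statement pointwise at $(z_0,[\zeta_0])\in P(E)$ using the standard second-fundamental-form identity together with a direct local computation.

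By (\ref{7}) and the correspondence (\ref{corres}), $h_G$ is the metric induced from $\tilde G$ on the line subbundle $O_{P(E)}(-1)\subset \tilde E$, so the classical formula for the curvature of a metric subbundle gives
\begin{equation}\label{plan sff}
\Theta|_{O_{P(E)}(-1)} \;=\; \Theta(h_G) + \beta^* \wedge \beta,
\end{equation}
where $\beta$ is the second fundamental form and the left-hand side is the Kobayashi curvature by definition. To make (\ref{plan sff}) explicit, I would choose a local holomorphic frame $s_1,\ldots,s_r$ of $E$ normalized at $(z_0,\zeta_0)$ so that $\zeta_0=(1,0,\ldots,0)$, $G_{i\bar j}(z_0,\zeta_0)=\delta_{ij}$, and $G_{i\bar j\alpha}(z_0,\zeta_0)=0$, and work in the affine chart $w_a=\zeta_{a+1}/\zeta_1$ on the fiber of $P(E)$, trivializing $O_{P(E)}(-1)$ by the section $\sigma=(1,w_1,\ldots,w_{r-1})$.

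A direct calculation in the spirit of \cite[Section 5]{ComplexFinsler} then shows that at $(z_0,[\zeta_0])$, with respect to the splitting $T^{1,0}_{(z_0,[\zeta_0])}P(E)=H\oplus V$ into the $n$-dimensional horizontal subspace $H=\operatorname{span}\{\partial_{z_\alpha}\}$ and the $(r-1)$-dimensional vertical subspace $V=\operatorname{span}\{\partial_{w_a}\}$, both sides of (\ref{plan sff}) are block-diagonal:
\[
\Theta(h_G)=\begin{pmatrix}K & 0\\ 0 & -I_{r-1}\end{pmatrix},\quad \beta^*\wedge\beta=\begin{pmatrix}0 & 0\\ 0 & I_{r-1}\end{pmatrix},\quad \Theta|_{O_{P(E)}(-1)}=\begin{pmatrix}K & 0\\ 0 & 0\end{pmatrix},
\]
for a certain $n\times n$ Hermitian matrix $K$. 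The $-I_{r-1}$ vertical block of $\Theta(h_G)$ reflects Lemma~\ref{fiber neg} (strong pseudoconvexity), and the $+I_{r-1}$ vertical block of $\beta^*\wedge\beta$ arises from the Euler-sequence identification of $V$ with homomorphisms from $\mathbb{C}\zeta_0$ to its $\tilde G$-orthogonal complement in $E_{z_0}$.

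Granting this block form, the signature claim is transparent: the spectrum of $\Theta(h_G)$ is the disjoint union of the spectrum of $K$ and $r-1$ copies of $-1$, so $\Theta(h_G)$ has signature $(n,r-1)$ iff $K>0$ and signature $(0,n+r-1)$ iff $K<0$. Since the horizontal block of $\Theta|_{O_{P(E)}(-1)}$ is $K$ and its vertical block vanishes, Kobayashi positivity (respectively negativity) is equivalent to $K>0$ (respectively $K<0$), matching the stated signatures. The main obstacle is the direct computation of the block-diagonal form of $\beta^*\wedge\beta$, particularly verifying that $\beta$ annihilates horizontal vectors at the normalized point (a Chern-connection calculation relying on $G_{i\bar j\alpha}=0$) and that the vertical block cancels exactly with $\Theta(h_G)|_V$.
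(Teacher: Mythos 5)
Your proposal is correct and follows essentially the same route the paper (implicitly, via Kobayashi's Section 5 computation) relies on: the normalized frame with $G_{i\bar j}=\delta_{ij}$ and $G_{i\bar j\alpha}=G_{i\bar j\bar\beta}=0$ is exactly the one the paper uses for (\ref{local for koba}) and in the proofs of Corollary \ref{cor} and Lemma \ref{lem inf}, and your block identity $\Theta(h_G)=\operatorname{diag}(K,-I_{r-1})$ versus $\Theta|_{O_{P(E)}(-1)}=\operatorname{diag}(K,0)$ is precisely the vanishing of the mixed and vertical terms recorded there. The verification you flag as the main obstacle does go through: differentiating the homogeneity identities (\ref{6}) and (\ref{7}) gives $\sum_k G_{k\bar j i}\zeta_k=0$ and $G_{\bar j\alpha}=\sum_i G_{i\bar j\alpha}\zeta_i=0$ at the normalized point, which kills $B_a\sigma$ and the horizontal--vertical cross terms and yields exactly the claimed blocks.
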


We give a local expression of the Kobayashi curvature that will be used in the final section. We focus on a local chart $\{(z,[\zeta]):\zeta_r\neq 0\}$ of $P(E)$, and let $\zeta_i/\zeta_r=w_i$ for $1\leq i\leq  r-1$. The curvature $\Theta$ of $\tilde{G}$
can be locally written as
\begin{equation*}
    \Theta=\sum_{\alpha,\beta}R_{\alpha\bar{\beta}}\,dz_\alpha\wedge d\bar{z}_\beta+\sum_{\alpha,l}P_{\alpha\bar{l}}\,dz_\alpha\wedge d\bar{w}_l+\sum_{k,\beta}\mathcal{P}_{k\bar{\beta}}\,dw_k\wedge d\bar{z}_\beta+\sum_{k,l}Q_{k\bar{l}}\,dw_k\wedge d\bar{w}_l,
\end{equation*}
where $R_{\alpha\bar{\beta}},P_{\alpha\bar{l}},\mathcal{P}_{k\bar{\beta}}$, and $Q_{k\bar{l}}$ are endomorphisms of $\tilde{E}$. Since $O_{P(E)}(-1)$ is a line bundle,
\begin{align*}
    \Theta|_{O_{P(E)}(-1)}&=\sum_{\alpha,\beta} \frac{\tilde{G}(R_{\alpha\bar{\beta}}\zeta,\zeta)}{\tilde{G}(\zeta,\zeta)} \,dz_\alpha\wedge d\bar{z}_\beta+\sum_{\alpha,l}\frac{\tilde{G}(P_{\alpha\bar{l}}\zeta,\zeta)}{\tilde{G}(\zeta,\zeta)}\,dz_\alpha\wedge d\bar{w}_l\\&+\sum_{k,\beta}\frac{\tilde{G}(\mathcal{P}_{k\bar{\beta}}\zeta,\zeta)}{\tilde{G}(\zeta,\zeta)}\,dw_k\wedge d\bar{z}_\beta+\sum_{k,l}\frac{\tilde{G}(Q_{k\bar{l}}\zeta,\zeta)}{\tilde{G}(\zeta,\zeta)}\,dw_k\wedge d\bar{w}_l,
\end{align*}
and by \cite[Formula (5.8)]{ComplexFinsler} the last three terms vanish, therefore
\begin{equation}\label{local for koba}
  \Theta|_{O_{P(E)}(-1)}=\sum_{\alpha,\beta} \frac{\tilde{G}(R_{\alpha\bar{\beta}}\zeta,\zeta)}{\tilde{G}(\zeta,\zeta)}\,dz_\alpha \wedge d\bar{z}_\beta.  
\end{equation}

\section{Proofs of Theorem \ref{thm dual} and Corollary \ref{cor}}\label{sec thm1}
Recall that the $L^2$-metric $H_k$ on $S^kE^*$ is 
\begin{align}\label{L2 metric in 3}
H_k(u,v):=\int_{P(E_z)} h^k(\Phi_{k,z}(u),\Phi_{k,z}(v) ) \omega_z^{r-1}, \textup{ for $u \textup{ and } v\in S^kE^*_z$,}
\end{align}
where $h$ is a Hermitian metric on $O_{P(E)}(1)$ and $\{\omega_z\}_{z\in X}$ is a family of Hermitian metrics on the fibers $P(E_z)$, and  $\Phi_{k,z}$ is the canonical isomorphism from $S^kE^*_z$ to $ H^0(P(E_z),O_{P(E_z)}(k))$.   

\begin{proof}[Proof of Theorem \ref{thm dual}]
According to the assumption, the Hermitian metric $H_k$ is Griffiths negative for some $k$; equivalently, $\log H_k(s,s)$ is strongly plurisubharmonic for any nonvanishing holomorphic section $s$ of $S^kE^*$. 

If for a vector $v\in E^*$, we denote its $k$-th tensor power by $v^k\in S^k E^*$, then one can define a Finsler metric $F_k$ on $E^*$ by setting $F_k(v)=H_k(v^k,v^k)^{1/2k}$. By Griffiths negativity of $H_k$, $F_k$ is strongly plurisubharmonic on the total space $E^*\setminus \{\text{zero section}\}$.

Next, we will use the $L^2$-integral feature of $H_k$ to prove that the Finsler metric $F_k$ is convex i.e., it is convex on each fiber $E^*_z$. Fix $z\in X$ and two vectors $u,v\in E^*_z$. Let $\{s_1,...,s_r\}$ be a holomorphic frame of $E$ around $z$ with fiber coordinates $(\zeta_1,...,\zeta_r)$, and $\{t_1,...,t_r\}$ be the dual frame on $E^*$. The frame $\{t_i\}$ on $E^*$ induces a frame $\{t^\alpha\}$ on $S^kE^*$ with $t^\alpha=t_1^{\alpha_1}\cdot\cdot\cdot t_r^{\alpha_r}$ and $\alpha_1+...+\alpha_r=k$. On $P(E_z)$, we consider the chart $\{[\zeta_1,...,\zeta_r]: \zeta_r\neq 0\}$, then $$e:=\frac{\sum_i \zeta_i s_i}{\zeta_r}$$ is a holomorphic frame of $O_{P(E_z)}(-1)$ with dual frame $e^*$ for $O_{P(E_z)}(1)$ and frame $(e^*)^k$ for  $O_{P(E_z)}(k)$. The global section $
 \Phi_{k,z}(t^\alpha)\in  H^0(P(E_z),O_{P(E_z)}(k))$ can be locally written as $$\frac{\zeta^\alpha}{\zeta_r^k}(e^*)^k, \text{where $\zeta^\alpha=\zeta_1^{\alpha_1}\cdot\cdot\cdot \zeta_r^{\alpha_r}$.
}
$$
Therefore, if $E^*_z\ni u=\sum_i u_it_i$ and $v=\sum_i v_i t_i$ with $u_i,v_i\in \mathbb{C}$, then 
\begin{equation}\label{u+v}
    \begin{aligned}
    h^k\big( \Phi_{k,z}((u+v)^k),\Phi_{k,z}((u+v)^k)  \big)=  \frac{\bigr|\sum_i (u_i+v_i) \zeta_i\bigr|^{2k}}{|\zeta_r|^{2k}}h(e^*,e^*)^k.
  \end{aligned}
\end{equation}
If we denote by $h^*$ the dual metric on $O_{P(E)}(-1)$, then $h(e^*,e^*)=|\zeta_r|^2/h^*(\sum_i\zeta_i s_i,\sum_i\zeta_i s_i)$. So (\ref{u+v}) becomes 
\begin{align}\label{global}
    h^k\big( \Phi_{k,z}((u+v)^k),\Phi_{k,z}((u+v)^k)  \big)= \frac{\bigr|\sum_i (u_i+v_i) \zeta_i\bigr|^{2k}}{h^*(\sum_i\zeta_i s_i,\sum_i\zeta_i s_i)^k};
\end{align}
the right hand side of formula (\ref{global}) is meaningful on the entire $P(E_z)$, and we will abbreviate the denominator by $(h^*)^k$ in the computation below.

Using formula (\ref{global}), we have
\begin{align*}
  F_k(u+v)&=H_k\big((u+v)^k,(u+v)^k\big)^{1/2k}\\
  &=\bigr(\int_{P(E_z)}h^k\big( \Phi_{k,z}((u+v)^k),\Phi_{k,z}((u+v)^k)  \big) \omega_z^{r-1} \bigr)^{1/2k}\\
  &=\bigr(\int_{P(E_z)}\frac{ \bigr|\sum_i (u_i+v_i) \zeta_i\bigr|^{2k}}{(h^*)^k}  \omega_z^{r-1} \bigr)^{1/2k}\\ 
  &\leq \bigr(\int_{P(E_z)} \big( \frac{\bigr|\sum_i u_i\zeta_i\bigr|}{(h^*)^{1/2}}    + \frac{\bigr|\sum_i v_i \zeta_i\bigr|}{(h^*)^{1/2}}  \big) ^{2k}  \omega_z^{r-1} \bigr)^{1/2k}\\
  &\leq
  \bigr(\int_{P(E_z)} \frac{\bigr|\sum_iu_i\zeta_i\bigr|^{2k}}{(h^*)^k}\omega_z^{r-1}
  \bigr)^{1/2k}+
  \bigr(\int_{P(E_z)}\frac{ \bigr|\sum_iv_i\zeta_i\bigr|^{2k}}{(h^*)^k}\omega_z^{r-1}
  \bigr)^{1/2k}\\
  &=H_k(u^k,u^k)^{1/2k}+H_k(v^k,v^k)^{1/2k}
  \\
  &=F_k(u)+F_k(v),
\end{align*}
where the second inequality is due to the Minkowski inequality. We have just shown that $F_k(u+v)\leq F_k(u)+F_k(v)$, and this implies convexity $F_k(\theta u+(1-\theta) v)\leq \theta F_k(u)+(1-\theta)F_k(v)$ for $0\leq \theta \leq 1$. So $F_k$ is a convex Finsler metric which is strongly plurisubharmonic on $E^*\setminus \{\text{zero section}\}$.

For the second part of the theorem, since the Finsler metric $F_k$ from above is convex and strongly plurisubharmonic, after adding a small Hermitian metric we obtain a strongly convex and strongly plurisubharmonic Finsler metric on $E^*$, whose dual metric has transversal Levi signature $(r,n)$ according to \cite[Theorem 2.5]{DemaillyMSRI} and  \cite{Sommese} (see below for the definition of transversal Levi signature). By Lemma \ref{lem signature} below we obtain a convex and strongly pseudoconvex Kobayashi positive Finsler metric on $E$.   
\end{proof}

\begin{lemma}\label{lem signature}
 If a Finsler metric $F$ has tranversal Levi signature $(r,n)$, then $F$ is strongly pseudoconvex and has positive Kobayashi curvature.
\end{lemma}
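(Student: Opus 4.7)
The plan is to translate the hypothesis into a statement about the signature of $\Theta(h_G)$ on $P(E)$, and then invoke Lemmas \ref{fiber neg} and \ref{K positive} directly.

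First, I would unpack the definition of transversal Levi signature in \cite{Sommese, DemaillyMSRI}: for a Finsler metric $F$ on $E$, it is the signature of the complex Hessian of $\log G = 2\log F$ on $E \setminus \{\text{zero section}\}$ transverse to the Euler $\mathbb{C}^*$-orbits. By log-homogeneity of $\log G$, this Hessian is $\mathbb{C}^*$-invariant and annihilates the radial direction, so it descends to a real $(1,1)$-form on $P(E)$ (of complex dimension $n + r - 1$). Using the local identification $h_G([\zeta],[\zeta]) = G(z,\zeta)$ in any trivialization of $O_{P(E)}(-1)$, one has $\Theta(h_G) = -i\partial\bar\partial \log G$, and the sign flip converts the transversal Levi signature $(r, n)$ of $F$ into the signature $(n, r-1)$ of $\Theta(h_G)$ on $P(E)$, the one positive radial direction of the Hessian being absorbed into the null Euler direction.

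Second, I would locate the $r - 1$ negative eigenvalues of $\Theta(h_G)$ along the vertical directions of $P(E) \to X$. These vertical directions are precisely the images of the fiber directions of $E \to X$ modulo the radial $\mathbb{C}^*$-action; by the construction of the transversal signature, the $r$ positive eigenvalues of the Hessian of $\log G$ live in the $r$-dimensional fiber $E_z$, and after quotienting by the radial null direction exactly $r-1$ of them survive as vertical directions on $P(E)$. Hence $\Theta(h_G)$ restricted to each fiber of $P(E) \to X$ is negative definite, and Lemma \ref{fiber neg} yields strong pseudoconvexity; simultaneously, the overall signature $(n, r-1)$ of $\Theta(h_G)$ on $P(E)$ is exactly the hypothesis of Lemma \ref{K positive}, yielding positive Kobayashi curvature.

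The main obstacle lies in the first step: carefully matching the Sommese--Demailly convention for transversal Levi signature to the complex Hessian of $\log G$ modulo the Euler direction, tracking the sign flip in $\Theta(h_G) = -i\partial\bar\partial \log G$, and identifying the radial null eigenspace inside the nominal $r$ positive eigenspace so that the remaining $r - 1$ positive eigenvalues descend precisely to the vertical directions of $P(E) \to X$. Once this bookkeeping is in place, the two conclusions are immediate consequences of Lemmas \ref{fiber neg} and \ref{K positive}.
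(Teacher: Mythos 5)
Your overall architecture (reduce everything to the signature of $\Theta(h_G)$ on $P(E)$, use Lemma \ref{fiber neg} for the fiber directions and Lemma \ref{K positive} to conclude) is the same as the paper's, but your first step rests on a misreading of the hypothesis, and that misreading hides the one genuine computation in the proof. Transversal Levi signature $(r,n)$ is defined via the Levi form of $F$ itself: $i\partial\bar\partial F$ is required to be positive definite on the full $r$-dimensional fiber $E_z$ (it is not radially degenerate --- $i\partial\bar\partial|\lambda|>0$) and negative definite on an $n$-dimensional transversal subspace $W\subset T^{1,0}_{(z,\zeta)}E$. Your version, phrased in terms of the complex Hessian of $\log G$, is a different and internally inconsistent statement: $i\partial\bar\partial\log G$ annihilates the Euler direction, so it cannot have $r+n$ nonzero eigenvalues on the $(n+r-1)$-dimensional space transverse to the $\mathbb{C}^*$-orbits, and the radial direction cannot simultaneously be ``one of the $r$ positive directions'' and the null direction you absorb. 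So the claimed sign-flip bookkeeping from $(r,n)$ to $(n,r-1)$ does not go through as stated.

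With the correct definition, two things remain to be done, neither of which appears in your proposal. First, the fiber part is immediate without any descent argument: positivity of $i\partial\bar\partial F$ on $E_z$ says $(F_{i\bar j})>0$, which is the definition of strong pseudoconvexity (equivalently $(G_{i\bar j})>0$); Lemma \ref{fiber neg} then gives $r-1$ negative directions of $\Theta(h_G)$ along each fiber of $P(E)\to X$. Second --- and this is the real content --- one must convert negativity of $i\partial\bar\partial F$ on $W$ into positivity of $\Theta(h_G)$ on $\pi_*W$, where $\pi:E\setminus\{\text{zero section}\}\to P(E)$ and $\pi^*\Theta(h_G)=-\partial\bar\partial\log G=-2\partial\bar\partial\log F$. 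Since
\begin{equation*}
-2\,\partial\bar\partial\log F=-2\,\frac{\partial\bar\partial F}{F}+2\,\frac{\partial F\wedge\bar\partial F}{F^2},
\end{equation*}
and the second term is positive semidefinite, one gets $\Theta(h_G)(\pi_*e,\overline{\pi_*e})\geq -2\,\partial\bar\partial F(e,\bar e)/F>0$ for $0\neq e\in W$; in particular $\pi_*$ is injective on $W$, producing $n$ positive directions. A dimension count then forces signature $(n,r-1)$ and Lemma \ref{K positive} finishes. Without the displayed identity you have no control over the rank-one correction term separating $\partial\bar\partial F$ from $\partial\bar\partial\log F$, and the passage from the hypothesis to the signature of $\Theta(h_G)$ is unjustified.
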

\begin{proof}
This lemma has appeared in Proposition 1.8 in the first version of  \cite{feng2018complex}. We give a simplified proof. Recall that a Finsler metric $F$ is said to have transversal Levi signature $(r,n)$, if at every point $(z,\zeta)\in E\setminus \{\text{zero section}\}$, the Levi form $i\partial\bar{\partial}F$ on $E$ is positive definite along the fiber $E_z$ and negative definite on some $n$-dimensional subspace $W\subset T^{1,0}_{(z,\zeta)}E$ which is transversal to the fiber $E_z$. 

Since $F_{i\bar{j}}>0$, the metric $F$ is strongly pseudoconvex. Let $\pi:E\setminus\{\text{zero section}\}\to P(E)$ be the quotient map. It is not hard to verify $\pi^*\Theta(h_G)=-\partial \bar{\partial }\log G$. For any nonzero $e\in W$,
\begin{align*}
 \Theta(h_G)(\pi_*e,\overline{\pi_*e})=\pi^*\Theta(h_G)(e,\overline{e})
 =-2\partial \bar{\partial }\log F(e,\overline{e})\\
 =-2(\frac{\partial \bar{\partial }F}{F}-\frac{\partial F\wedge \bar{\partial}F }{F^2})(e,\overline{e})\geq -2\frac{\partial \bar{\partial }F}{F}(e,\overline{e})>0. 
\end{align*}
As a consequence, $\pi_*W$ is of dimension $n$, and $\Theta(h_G)$ is positive definite on $\pi_*W$. By Lemma \ref{fiber neg} and a dimension count, $\Theta(h_G)$ has signature $(n,r-1)$, thus $F$ is Kobayashi positive by Lemma \ref{K positive}.
\end{proof}

\begin{proof}[Proof of Corollary \ref{cor}]

By Lemmas \ref{fiber neg} and \ref{K positive}, a strongly pseudoconvex Kobayashi positive Finsler metric $F$ on $E$ induces a Hermitian metric $h$ on $O_{P(E)}(1)$ such that the curvature $\Theta(h)$ restricted to each fiber $P(E_z)$ is positive, and $\Theta(h)$ has signature $(r-1,n)$. With this $h$ on $O_{P(E)}(1)$ and fixing a family of Hermitian metrics $\{\omega_z\}_{z\in X}$ on the fibers $P(E_z)$, we have the Hermitian metric $H_k$ in (\ref{L2 metric}) on $S^kE^*$.

Given a point $(z_0,\zeta_0)\in E$, we can find a frame $\{s_i\}$ of $E$ such that 
$G_{i\bar{j}}(z_0,\zeta_0)=\delta_{ij}$ and $G_{i\bar{j}\alpha}(z_0,\zeta_0)=G_{i\bar{j}\bar{\beta}}(z_0,\zeta_0)=0$ (such a frame can be obtained by (5.11) in \cite{ComplexFinsler}). In this frame, $$\Theta(h)|_{(z_0,[\zeta_0])}=-\sum_{\alpha,\beta}\frac{\partial ^2\log h}{\partial z_\alpha \partial \bar{z}_\beta} dz_\alpha \wedge d\bar{z}_\beta-\sum_{i,j}\frac{\partial ^2\log h}{\partial w_i \partial \bar{w}_j} dw_i \wedge d\bar{w}_j.$$
Since $\Theta(h)|_{P(E_{z_0})}$ is positive and $\Theta(h)$ has signature $(r-1,n)$, the matrix $$\bigr(-\frac{\partial ^2\log h}{\partial z_\alpha \partial \bar{z}_\beta}\bigr)$$
is negative at $(z_0,[\zeta_0])$ hence negative in a neighborhood of $(z_0,[\zeta_0])$ in $P(E)$.

Therefore, $P(E)$ can be covered by a finite number of open sets $\{U_m\}$; each $U_m$ corresponds to a frame of $E$ such that the corresponding matrix $(-\partial ^2\log h/\partial z_\alpha \partial \bar{z}_\beta)$ is negative in $U_m$. For a fixed $U_m$, we write $$\omega_z=\sqrt{-1}\sum_{i,j}g_{i\bar{j}}(z,w)dw_i\wedge d\bar{w}_j,$$ so $\omega_z^{r-1}=\det (g_{i\bar{j}})\bigwedge_l(\sqrt{-1}dw_l\wedge d\bar{w}_l) $ that we will abbreviate as $\det(g) dw\wedge d\bar{w}$. By taking $k$ large, the following matrix can be made negative 
\begin{equation}\label{k large}
\big(\frac{\partial^2 -k\log h-\log \det(g)}{\partial z_\alpha\partial \bar{z}_\beta}\big).
\end{equation}
Since there are only finitely many $U_m$, we can take $k$ large such that the corresponding (\ref{k large}) in each $U_m$ is negative. 

For a fixed point $z_0\in X$, we partition $P(E_{z_0})=\bigcup_l V_l$ with $V_l$ in $U_m$ for some $m$. Let us denote the curvature of $H_k$ by $\Theta$ an $\End S^kE^*$-valued (1,1)-form. Given a nonzero $u\in S^kE^*_{z_0}$, we would like to show the (1,1)-form $H_k(\Theta u, u)$ is negative, namely, $H_k(\Theta u,u) (\eta,\bar{\eta})<0$ for $0\neq \eta\in T^{1,0}_{z_0}X$.  First, we extend $u$ to a local holomorphic section whose covariant derivative (with respect to $H_k$) at $z_0$ is zero. We still denote the holomorphic extension by $u$. A simple computation shows, at $z_0$, \begin{equation}\partial\label{right} \bar{\partial}H_k(u,u) (\eta,\bar{\eta})=-H_k(\Theta u,u) (\eta,\bar{\eta}).\end{equation} 
Meanwhile, if we denote $p$ as the projection from $P(E)$ to $X$, then by definition 
\begin{equation}\label{left}
\begin{aligned}
  \partial \bar{\partial}H_k(u,u)&=\partial \bar{\partial}p_*\big(h^k(\Phi_{k,z}(u),\Phi_{k,z}(u) ) \omega_z^{r-1}\big)=p_*\partial \bar{\partial}\big(h^k(\Phi_{k,z}(u),\Phi_{k,z}(u) ) \omega_z^{r-1}\big)\\
  &=\sum_l \int_{V_l}\partial \bar{\partial}\big(h^k(\Phi_{k,z}(u),\Phi_{k,z}(u) ) \omega_z^{r-1}\big).
\end{aligned}
\end{equation}
Let us compute $\partial \bar{\partial}\big(h^k(\Phi_{k,z}(u),\Phi_{k,z}(u) ) \omega_z^{r-1}\big)$ in $U_m$. In $U_m$, we can write $\Phi_{k,z}(u)$ as $fe$ with $f$ a scalar-valued holomorphic function and $e$ a frame for $O_{P(E)}(k)$. So, $h^k(\Phi_{k,z}(u),\Phi_{k,z}(u) )=|f|^2h^k(e,e)$, and we will abbreviate $h^k(e,e)$ by $h^k$. Moreover, we have $\omega_z^{r-1}= \det(g) dw\wedge d\bar{w}$. Therefore, in $U_m$,  \begin{align*}
\partial \bar{\partial}\big(h^k(\Phi_{k,z}(u),\Phi_{k,z}(u) ) \omega_z^{r-1}\big)&= \partial \bar{\partial}\big(|f|^2h^k \det(g) dw\wedge d\bar{w}\big)\\
&=\sum_{\alpha,\beta} \frac{\partial^2 |f|^2h^k\det(g)}{\partial z_\alpha \partial\bar{z}_\beta}dz_\alpha\wedge d\bar{z}_\beta\wedge dw\wedge d\bar{w}.
\end{align*}   
Let us denote $h^k\det (g)$ by $e^{-\phi}$ temporarily. Let $\eta=\sum_{\alpha}\eta_\alpha \partial/\partial z_\alpha\in T^{1,0}_{z_0}X$. A straightforward computation shows 
\begin{equation}\label{curvature}
    \sum_{\alpha,\beta}\frac{\partial^2 |f|^2 e^{-\phi}}{\partial z_\alpha \partial\bar{z}_\beta}\eta_\alpha \bar{\eta}_\beta=\big( 
    e^{-\phi}\bigr|\sum_\alpha \frac{\partial f}{\partial z_\alpha}\eta_\alpha-f\sum_\alpha \eta_\alpha \frac{\partial \phi}{\partial z_\alpha}\bigr|^2-|f|^2e^{-\phi}\sum_{\alpha,\beta}\frac{\partial^2 \phi}{\partial z_\alpha\partial \bar{z}_\beta}\eta_\alpha \bar{\eta}_\beta
    \big).
\end{equation}
The matrix in the last term of (\ref{curvature})
$$\frac{\partial^2 \phi}{\partial z_\alpha\partial \bar{z}_\beta}=\frac{\partial^2 -k\log h-\log \det(g)}{\partial z_\alpha\partial \bar{z}_\beta}$$
is negative by (\ref{k large}), so (\ref{curvature}) is positive, and 
\begin{equation}\label{vm}
  \int_{V_l}\partial \bar{\partial}\big(h^k(\Phi_{k,z}(u),\Phi_{k,z}(u) ) \omega_z^{r-1}\big)(\eta,\bar{\eta})>0.  
\end{equation}
Hence by (\ref{right}) and (\ref{left}), $H_k(\Theta u,u)(\eta,\bar{\eta})<0$. Therefore, the Hermitian metric $H_k$ on $S^kE^*$ is Griffith negative for $k$ large. The proof is complete by using Theorem \ref{thm dual}.
 
The above curvature computation is a combination of ideas from \cite[Section 4.2]{positivityandvanishingthmliu}, \cite[Section 3]{berndtsson2009positivity}.
\end{proof}

\section{Convexity of the $k$-th root of an inner product}\label{sec convex}

Given a complex vector space $V$ of dimension $r$, let $S^kV$ be the $k$-th symmetric power of $V$. For an inner product $H$ on $S^kV$, the map $V\ni v\mapsto H(v^k,v^k)$ is in general not convex, and $V\ni v\mapsto H(v^k,v^k)^{1/2k}$ is even less so.

\begin{example}
Take $r=2$ and a basis $\{e_1,e_2\}$ for $V$, then $\{e_1^2,e_2^2,e_1e_2\}$ is a basis for $S^2V$. Consider an inner product $H$ on $S^2V$ with matrix representation under $\{e_1^2,e_2^2,e_1e_2\}$ equal to 
\[
\begin{pmatrix}
1/2 & 0 & 0\\
0 & 1/2 & 0\\
0&0&1
\end{pmatrix}.
\] We write $v\in V$ as $v=v_1e_1+v_2e_2$ with $v_1=x_1+\sqrt{-1}y_1$, $v_2=x_2+\sqrt{-1}y_2$, and $x_j,y_j$ real. So $$H(v^2,v^2)= |v_1^2|^2/2+ |v_2^2|^2/2+|2v_1v_2|^2= (x_1^2+y_1^2)^2/2+ (x_2^2+y_2^2)^2/2+4(x_1^2+y_1^2)(x_2^2+y_2^2),$$ 
\[
\begin{pmatrix}\frac{\partial^2 H}{\partial x_i \partial x_j}\end{pmatrix}_{i,j}=
\begin{pmatrix}
4x_1^2+2(x_1^2+y_1^2)   +8(x_2^2+y_2^2) & 16x_1x_2\\
16x_1x_2 & 4x_2^2+2(x_2^2+y_2^2)  +8(x_1^2+y_1^2)
\end{pmatrix}.
\]
At $x_1=x_2=1$ and $y_1=y_2=0$, this 2-by-2 matrix equals 
\[
\begin{pmatrix}
14 & 16\\
16 & 14
\end{pmatrix}
\] which is not semipositive. So the map $v\mapsto H(v^2,v^2)$ is not convex.
\end{example}

The next example shows that if the $k$-th root of an inner product is convex, the $k$-th root of the dual inner product is in general not convex.

\begin{example}
Let $V$ be a complex vector space of dimension two with basis $\{e_1,e_2\}$. Then $\{e_1^2,e_2^2,e_1e_2\}$ is a basis for $S^2V$. Let $\{e_1^*,e_2^*\}$ be the dual basis for $V^*$, and we use $(\zeta_1,\zeta_2)$ to denote vectors in $V^*$. Let $h$ be an inner product on $V^*$ such that $\{e_1^*,e_2^*\}$ is orthonormal. This $h$ induces on $O_{P(V
^*)}(1)$ a Hermitian metric which we still denote by $h$; the curvature $\omega$ of $h$ is a K\"ahler form on $P(V^*)$. Through the isomorphism $\Phi:S^2V\to H^0(P(V^*),O_{P(V^*)}(2))$, we define an inner product $H$ on $S^2 V$ by, for $\xi,\eta\in S^2V$, 
$$
H(\xi,\eta)=\int_{P(V^*)}h^2(\Phi(\xi),\Phi(\eta))\omega.
$$
On the chart $\{[\zeta_1,\zeta_2]:\zeta_2\neq 0\}$ of $P(V^*)$, we have a local frame $$e:=\frac{\zeta_1}{\zeta_2}e_1^*+e_2^*$$ for $O_{P(V^*)}(-1)$, and a dual frame $e^*$ for $O_{P(V^*)}(1)$. Therefore, the global sections have local expressions  
\begin{align}
    \Phi(e_1^2)=\frac{\zeta_1^2}{\zeta_2^2}(e^*)^2,\text{ } \Phi(e_2^2)=(e^*)^2,\text{ and } \Phi(e_1e_2)=\frac{\zeta_1}{\zeta_2}(e^*)^2.
\end{align}
Moreover, if we denote $\zeta_1/\zeta_2$ by $z$, then $h(e^*,e^*)=1/(|z|^2+1)$ and $\omega=dz\wedge d\bar{z}/(|z|^2+1)^2$. Having these information, one can compute the matrix representation of $H$ in terms of $\{e_1^2,e_2^2,e_1e_2\}$. For example, 
\begin{align*}
  H(e_1^2,e_2^2)=\int_{\mathbb{C}}z^2\frac{1}{(|z|^2+1)^2}\frac{dz\wedge d\bar{z}}{(|z|^2+1)^2}=0,\\ 
  H(e_1^2,e_1^2)=\int_{\mathbb{C}}|z|^4\frac{1}{(|z|^2+1)^2}\frac{dz\wedge d\bar{z}}{(|z|^2+1)^2}=\frac{\pi}{3},
\end{align*}
and similarly $H(e_2^2,e_2^2)=\pi/3, H(e_1e_2,e_1e_2)=\pi/6, H(e_1^2,e_1e_2)=H(e_2^2,e_1e_2)=0$. Therefore, the matrix representation under $\{e_1^2,e_2^2,e_1e_2\}$ is
\[
\begin{pmatrix}
\frac{\pi}{3}&0&0\\
0&\frac{\pi}{3}&0\\
0&0&\frac{\pi}{6}
\end{pmatrix}.
\]
The convexity of the map $V\ni v\mapsto H(v^2,v^2)$ follows by the Minkowski inequality as in the proof of Theorem \ref{thm dual}. But the dual metric $H^*$ has matrix representation under $\{{e^*_1}^2,{e^*_2}^2,e^*_1e^*_2\}$
\[
\frac{6}{\pi}\begin{pmatrix}
1/2&0&0\\
0&1/2&0\\
0&0&1
\end{pmatrix},
\]
and we know from the first example that $V^*\ni v\mapsto H^*(v^2,v^2)$ is not convex. 
\end{example}

\section{Extremal characterization of the Kobayashi curvature}\label{sec extre}

In this section we discuss a different way of obtaining the Kobayashi curvature by following a curvature notion defined in \cite{LLmax,LLextrapolation} (see also \cite{rochberg1984}). Finsler metrics are assumed convex in \cite{LLmax,LLextrapolation}, while in our case we assume strong pseudoconvexity. This characterization has the potential to define curvature for Finsler metrics with weaker convexity.

Let $F$ be a strongly pseudoconvex Finsler metric on $E$ with $F^2=G$. For $z_0\in X$, $v\in T^{1,0}_{z_0} X$ and $0\neq\zeta\in E_{z_0}$, we define
\begin{equation}\label{inf}
K_v(\zeta)=-\inf \partial\bar{\partial}\log G(\phi)\bigr|_{z_0}(v,\bar{v}),
\end{equation}
the inf taken over local holomorphic sections $\phi$ of $E$ such that $\phi(z_0)=\zeta$. 

Consider a tangent vector $\tilde{v}$  to $P(E)$ at $(z_0,[\zeta])$ such that $p_*(\tilde{v})=v$. By (\ref{local for koba}), $\Theta|_{O_{P(E)}(-1)}(\tilde{v},\bar{\tilde{v}})$ is independent of the lift $\tilde{v}$, so we will simply write $\Theta|_{O_{P(E)}(-1)}(v,\bar{v})$.

\begin{lemma}\label{lem inf}
 $K_v(\zeta)=\Theta|_{O_{P(E)}(-1)}(v,\bar{v})$. 
\end{lemma}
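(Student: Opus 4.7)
The approach is a direct computation in a well-chosen holomorphic frame of $E$ near $z_0$, reducing the optimization over $\phi$ to a Cauchy--Schwarz estimate.

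First, I would fix a frame $\{s_1,\ldots,s_r\}$ of the kind used in the proof of Corollary~\ref{cor}: one satisfying $G_{i\bar j}(z_0,\zeta) = \delta_{ij}$ and $G_{i\bar j\alpha}(z_0,\zeta) = G_{i\bar j\bar\beta}(z_0,\zeta) = 0$. Combined with (\ref{7}) and the fiber-homogeneity (\ref{6}), this yields the secondary identities $G_i(z_0,\zeta)=\bar\zeta_i$, $G_\alpha(z_0,\zeta)=0$, and $G_{\alpha\bar j}(z_0,\zeta)=G_{i\bar\beta}(z_0,\zeta)=0$, which keep the subsequent calculation short.

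Next, for a local holomorphic section $\phi=\sum\phi_i s_i$ with $\phi(z_0)=\zeta$, I would observe that $\partial\bar\partial\log G(\phi)(v,\bar v)$ at $z_0$ depends on $\phi$ only through the vector $W\in\mathbb{C}^r$ with components $W_i := \sum_\alpha v_\alpha(\partial\phi_i/\partial z_\alpha)(z_0)$, and that every $W\in\mathbb{C}^r$ is realized by some such $\phi$. Expanding $\partial\bar\partial\log G(\phi)$ at $z_0$ and repeatedly applying the frame identities above, I expect the result to take the form
\[
\sum_{\alpha,\beta} v_\alpha\bar v_\beta\,\partial_\alpha\bar\partial_\beta\log G(\phi)(z_0) = \frac{\sum_{\alpha,\beta} v_\alpha\bar v_\beta G_{\alpha\bar\beta}(z_0,\zeta)}{G(z_0,\zeta)} + \frac{|W|^2}{G(z_0,\zeta)} - \frac{\left|\sum_i \bar\zeta_i W_i\right|^2}{G(z_0,\zeta)^2},
\]
in which the last two summands are jointly nonnegative by Cauchy--Schwarz and vanish precisely when $W=\lambda\zeta$ for some $\lambda\in\mathbb{C}$---that is, when the first-order variation of $[\phi]$ in direction $v$ is trivial. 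Hence the infimum in (\ref{inf}) is attained on this line and equals the first, $W$-independent summand.

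Finally I would match this with $\Theta|_{O_{P(E)}(-1)}(v,\bar v)$ via (\ref{local for koba}). In the chosen frame, the base derivatives of $\tilde G_{i\bar j}$ at $(z_0,[\zeta])$ vanish, so the Chern curvature components collapse to $(R_{\alpha\bar\beta})^k_i = -G_{i\bar k\alpha\bar\beta}(z_0,\zeta)$; then (\ref{7}) gives $\tilde G(R_{\alpha\bar\beta}\zeta,\zeta)=-G_{\alpha\bar\beta}(z_0,\zeta)$, and plugging into (\ref{local for koba}) recovers exactly the $W$-independent first summand, with the right sign to match $K_v(\zeta)=-\inf\partial\bar\partial\log G(\phi)(v,\bar v)|_{z_0}$. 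The main obstacle will be the bookkeeping in the $\partial\bar\partial\log G(\phi)$ expansion: many cross terms appear initially, and one has to chain the normal-frame identities carefully to see that they all vanish and that what remains reorganizes into the clean Cauchy--Schwarz expression above.
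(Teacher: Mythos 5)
Your argument is correct and reaches the paper's formula by the same overall strategy: normalize with the frame $G_{i\bar j}(z_0,\zeta)=\delta_{ij}$, $G_{i\bar j\alpha}=G_{i\bar j\bar\beta}=0$, reduce the infimum over sections to a Cauchy--Schwarz inequality, and identify the residual term with $\tilde G(R_{\alpha\bar\beta}\zeta,\zeta)/\tilde G(\zeta,\zeta)$. The difference is in the middle step. The paper works on the pull-back bundle $(\tilde E,\tilde G)$ over $P(E)$: it expands $\partial\bar\partial G(z,\phi(z))$ using the Chern connection $D_{z_\alpha},D_{w_i}$ and the curvature blocks $R,P,\mathcal P,Q$, kills the $P,\mathcal P,Q$ contributions by citing Kobayashi's identity (5.8), and applies Cauchy--Schwarz to $\tilde G(\sum v_\alpha D_{z_\alpha}\phi+\cdots,\phi)$. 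You instead differentiate the scalar $G(z,\phi(z))$ directly and use the Euler-type identities coming from (\ref{6})--(\ref{7}) (e.g.\ $G_i=\sum_jG_{i\bar j}\bar\zeta_j$, hence $G_{\alpha\bar j}(z_0,\zeta)=0$) to make the cross terms vanish; this is more elementary in that it avoids the horizontal--vertical curvature decomposition and the appeal to (5.8), at the cost of more bookkeeping, and it also isolates the dependence on $\phi$ through the single vector $W$, which pins down the full equality locus $W=\lambda\zeta$ rather than exhibiting one extremal section as the paper does (the paper's choice $\phi=\zeta(1+\sum_\alpha(z-z_0)_\alpha)$ corresponds to $\lambda=1$; the constant section $\lambda=0$ works just as well). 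The only point to make explicit when writing this up is the sign/normalization in the final identification $(R_{\alpha\bar\beta})^k_i=-G_{i\bar k\alpha\bar\beta}$ at the normal point, so that $\tilde G(R_{\alpha\bar\beta}\zeta,\zeta)=-G_{\alpha\bar\beta}(z_0,\zeta)$ matches the convention behind (\ref{local for koba}); you flag this but should verify it against the curvature convention used in \cite{ComplexFinsler}.
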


\begin{proof}
Fix a frame $\{s_1,...,s_r\}$ of $E$ around $z_0$ such that
$G_{i\bar{j}}(z_0,\zeta)=\delta_{ij}$ and $G_{i\bar{j}\alpha}(z_0,\zeta)=G_{i\bar{j}\bar{\beta}}(z_0,\zeta)=0$ (see (5.11) in \cite{ComplexFinsler}). Since the vector $\zeta\in E_{z_0}$ is nonzero, without loss of generality we assume its $r$-th component $\zeta_r\neq 0$. Suppose $v=\sum v_\alpha\partial/\partial z_\alpha$. Under this local trivialization, the holomorphic section $\phi=(\phi_1,...,\phi_r)$ with $\phi_r\neq 0$ near $z_0$. The computation will be done on a chart $\{(z,[w_1,...,w_{r-1},1])\}$ of $P(E)$. The connection form of the Chern connection of $\tilde{G}$ is an $\End \tilde{E}$-valued $(1,0)$-form on $P(E)$ that we write as $\sum_\alpha A_\alpha dz_\alpha+\sum_i B_i dw_i$ with $A_\alpha$ and $B_i\in \End \tilde{E}$. Moreover, $A_\alpha(z_0,[\zeta])=0$ because of the choice of the frame 
$\{s_i\}$.
By setting $D_{z_\alpha}=\partial/\partial z_\alpha+A_\alpha$ and $D_{w_i}=\partial/\partial w_i+B_i$, we have
\begin{align*}
  &\frac{\partial }{\partial z_\alpha}\tilde{G}_{(z,[w,1])}(\phi(z),\phi(z))=\tilde{G}_{(z,[w,1])}(D_{z_\alpha}\phi,\phi),\\
  &\frac{\partial }{\partial w_i}\tilde{G}_{(z,[w,1])}(\phi(z),\phi(z))=\tilde{G}_{(z,[w,1])}(D_{w_i}\phi,\phi).  
\end{align*}

By (\ref{7}), $G(\phi)=G(z,\phi(z))=\tilde{G}_{(z,[\phi(z)])}(\phi(z),\phi(z))$ and we will suppress the subscript $(z,[\phi(z)])$ of $\tilde{G}$ for simplicity. Therefore, by chain rule 
\begin{align*}
\partial\bar{\partial} G(z,\phi(z))(v,\bar{v})&=\sum^n_{\alpha,\beta=1}\frac{\partial^2  }{\partial z_\alpha \partial \bar{z}_\beta}\tilde{G}(\phi(z),\phi(z))v_\alpha \bar{v}_\beta\\
&=\big\|\sum_\alpha v_\alpha D_{z_\alpha}\phi+\sum_{i,\alpha}D_{w_i}\phi\frac{\partial (\frac{\phi_i}{\phi_r})}{\partial z_\alpha}v_\alpha\big\|^2_{\tilde{G}}\\
&-\sum_{\alpha,\beta}v_\alpha \bar{v}_\beta\big[\tilde{G}(R_{\alpha\bar{\beta}}\phi,\phi)+\sum^{r-1}_{i=1}\tilde{G}(P_{\alpha\bar{i}}\phi,\phi)\overline{\frac{\partial (\frac{\phi_i}{\phi_r})}{\partial z_\beta}}\\
&+\sum^{r-1}_{i=1}\tilde{G}(\mathcal{P}_{i\bar{\beta}}\phi,\phi)\frac{\partial (\frac{\phi_i}{\phi_r})}{\partial z_\alpha}+\sum^{r-1}_{i,j=1}\tilde{G}(Q_{i\bar{j}}\phi,\phi)\frac{\partial (\frac{\phi_i}{\phi_r})}{\partial z_\alpha}\overline{\frac{\partial (\frac{\phi_j}{\phi_r})}{\partial z_\beta}}\big].
\end{align*}
The last three terms involving $P,\mathcal{P}$, and $Q$ are zero by \cite[(5.8)]{ComplexFinsler}. Meanwhile, by the Cauchy--Schwarz inequality 
\begin{align*}
    \big| \sum_\alpha \frac{\partial}{\partial z_\alpha} G(z,\phi(z)) v_\alpha \big|^2=&\big|\tilde{G}\big(\sum_\alpha v_\alpha D_{z_\alpha}\phi+\sum_{i,\alpha}D_{w_i}\phi\frac{\partial (\frac{\phi_i}{\phi_r})}{\partial z_\alpha}v_\alpha,\phi\big)\big|^2\\
    \leq& \big\|\sum_\alpha v_\alpha D_{z_\alpha}\phi+\sum_{i,\alpha}D_{w_i}\phi\frac{\partial (\frac{\phi_i}{\phi_r})}{\partial z_\alpha}v_\alpha\big\|^2_{\tilde{G}} \|\phi\|^2_{\tilde{G}}.
\end{align*}
All added up, we see 
\begin{align*}
    &\partial \bar{\partial}\log G(z,\phi(z))(v,\bar{v})\\
    &= \frac{1}{\|\phi\|^4_{\tilde{G}}}   \big(\big\|\sum_\alpha v_\alpha D_{z_\alpha}\phi+\sum_{i,\alpha}D_{w_i}\phi\frac{\partial (\frac{\phi_i}{\phi_r})}{\partial z_\alpha}v_\alpha\big\|^2_{\tilde{G}}  \| \phi \|^2_{\tilde{G}}   -\sum_{\alpha,\beta}v_\alpha \bar{v}_\beta \tilde{G}(R_{\alpha\bar{\beta}}\phi,\phi)\| \phi \|^2_{\tilde{G}}\\ 
    &- \big|\sum_\alpha \frac{\partial}{\partial z_\alpha} G(z,\phi(z)) v_\alpha \big|^2\big)\\
    &\geq -\frac{\sum_{\alpha,\beta}v_\alpha \bar{v}_\beta\tilde{G}( R_{\alpha\bar{\beta}}\phi,\phi)}{\|\phi\|^2_{\tilde{G}}}.
    \end{align*}
Therefore,
\begin{align*}
    \partial \bar{\partial}\log G(z,\phi(z))(v,\bar{v})\geq -\sum_{\alpha,\beta}\frac{\tilde{G} (R_{\alpha\bar{\beta}}\zeta,\zeta)}{\tilde{G}(\zeta,\zeta)}v_\alpha\bar{v}_\beta; 
\end{align*}   
the equality holds if we choose $\phi(z)=(\zeta_1,...,\zeta_r)+ (\zeta_1,...,\zeta_r)\sum_\alpha(z-z_0)_\alpha$ under the frame $\{s_i\}$. Indeed,  $$\frac{\partial (\frac{\phi_i}{\phi_r})}{\partial z_\alpha}\bigr|_{z_0}=\frac{\frac{\partial \phi_i}{\partial z_\alpha}\phi_r-\phi_i \frac{\partial \phi_r}{z_\alpha}   }{\phi_r^2}\bigr|_{z_0}=0, \text{ and }  A_\alpha(z_0,[\zeta])=0. $$   
So, at $(z_0,[\zeta])$,  $$\sum_\alpha v_\alpha D_{z_\alpha}\phi+\sum_{i,\alpha}D_{w_i}\phi\frac{\partial (\frac{\phi_i}{\phi_r})}{\partial z_\alpha}v_\alpha = \sum_\alpha v_\alpha \zeta \text{ is parallel with } \phi(z_0)=\zeta.$$ 
As a result, $$K_v(\zeta)=\sum_{\alpha,\beta}\frac{\tilde{G} (R_{\alpha\bar{\beta}}\zeta,\zeta)}{\tilde{G}(\zeta,\zeta)}v_\alpha\bar{v}_\beta=\Theta|_{O_{P(E)}(-1)}(v,\bar{v}).$$

\end{proof}

One immediate consequence of Lemma \ref{lem inf} is that the Kobayashi curvature is seminegative if and only if $\log G(z,\phi(z))$ is plurisubharmonic for any local holomorphic section $\phi$ (which is also equivalent to $G$ being plurisubharmonic on the total space $E$).

\bibliographystyle{amsalpha}
\bibliography{Dominion}

\textsc{Institute of Mathematics, Academia Sinica, Taipei, Taiwan}

\texttt{\textbf{krwu@gate.sinica.edu.tw}}

\end{document}